\newtheorem{theorem}{Theorem}[section]
\newtheorem{lemma}{Lemma}[section]
\newtheorem{remark}{Remark}[section]
\newtheorem{definition}{Definition}[section]
\numberwithin{equation}{section}
\begin{document}
\title{ Nontrivial solutions for  semilinear elliptic systems  via Orlicz-Sobolev theory}
\author{Fei Fang\footnote{Corresponding author. E-mail address: fangfei68@163.com}
\\School of Mathematical Sciences, Peking  University \\Beijing, 100871,  China}
 \date{}
 \maketitle
\noindent \textbf{\textbf{Abstrct:}}
In this paper, the semilinear elliptic systems with Dirichlet boundary value are considered
\begin{align}
\left\{
\begin{array}{ll}
  -\Delta v=f(u) & \mathrm{in}\ \Omega,  \\
 -\Delta u=g(v) & \mathrm{in}\ \Omega,   \\
 u=0,  \ v=0  & \mathrm{on}\ \partial\Omega,
\end{array}
\right.
\end{align}
 We extend the notion of subcritical growth from polynomial growth to N-function
growth. Under N-function growth, nontrivial solutions are obtained via Orlicz-Sobolev spaces
and variational methods. It's also noteworthy that the nonlinear term $g(v)$ does not have to  satisfy the usual  Ambrosetti-Rabinowitz condition. So,
in a sense, we enrich recent results of  D. ~G. de~Figueiredo, J. ~M.  do~{\'O} and  B. ~Ruf  [D. ~G.  de~Figueiredo,        J. ~M.  do~{\'O},        B. ~Ruf,        An {O}rlicz-space approach to superlinear elliptic systems,        J.  Funct.  Anal.  224 (2005) 471--496].

\noindent \textbf{\textbf{Keywords:}}   Semilinear elliptic systems, Orlicz-sobolev spaces,   concentration-compactness principle

 \section{Introduction}
In this paper,   we shall be concerned with the existence of nontrivial solutions of semilinear Hamilton systems
\begin{align}
(OP)\ \ \ \left\{
\begin{array}{ll}
  -\Delta v=f(u) & \mathrm{in}\ \Omega,  \\
 -\Delta u=g(v) & \mathrm{in}\ \Omega,   \\
 u=0,  \ v=0  & \mathrm{on}\ \partial\Omega,
\end{array}
\right.
\end{align}
where $\Omega$ is a bounded domain in  $R^N(N\geq 1)$ with smooth boundary $\partial \Omega$,   and $f,   g:\Omega\rightarrow R$ are continuous functions.
Let $g$ be an odd and inverse  function and  $p(t)=g^{-1}(t)$. Then from $ -\Delta u=g(v)$, we can solve for  $v$  and  plug it into $-\Delta v=f(u)$.    So we can transfer the semilinear Hamilton systems into the following problem
\begin{align}
(NP)\ \ \ \left\{
\begin{array}{ll}
  \Delta(p(\Delta u))=f(u) & \mathrm{in}\ \Omega,  \\
  u=\Delta u=0 & \mathrm{in}\ \partial\Omega.
\end{array}
\right.
\end{align}

As in \cite{MR2763349,  r4,   r12,   r7,   r1},   let
 $$P(t):=\int_0^t p(s)ds,  \ \ \ \tilde{P}(t)=\int_0^t p^{-1}(s)ds,  $$
then  $P$ and   $\tilde{P}$  are complementary $N$-functions(see \cite{r2,     MR1890178,r38}).

In order to  equip with  an  Orlicz-Sobolev space for the operator $\Delta(p(\Delta \cdot))$,  we make some assumptions on the function  $p$
\begin{compactitem}
  \item[\quad\quad($p_0$):] $p(t)\in C^1(0, +\infty),   p(t)>0, p'(t)>0$  for $t>0$,
  \item[\quad\quad($p_1$):] $1<p^{-}:=\displaystyle \inf _{t>0}\frac{tp(t)}{P(t)} \leq p^{+}:=\displaystyle\sup_{t>0}\frac{tp(t)}{P(t)}<+\infty$ ,
  \item[\quad\quad($p_2$):] $0<a^{-}:=\displaystyle \inf _{t>0}\frac{tp^{'}(t)}{p(t)} \leq a^{+}:=\displaystyle\sup_{t>0}\frac{tp^{'}(t)}{p(t)} <+\infty$ .
\end{compactitem}
From ($p_1$),    $P(t)$ satisfies the $\Delta_2$ condition,   i.e., there exists a  constant $k>0$ such that
$$P(2t)\leq kP(t),   \quad t>0.$$
Under the conditions ($p_0$) and ($p_1$),   the Orlicz space  $L^{P}$ coincides with the set (equivalence classes)
of measurable functions  such that $u: \Omega\rightarrow R$
\begin{equation}  \label{ge4}
\int_{\Omega}P(|u|)dx<+\infty.
\end{equation}
The space $L^{P}$ is a Banach space endowed with the Luxemburg norm
\begin{equation*}
|u|_{P}:=\mathrm{inf}\left\{ k>0,  \int_{\Omega} P\left(\frac{|u|}{k}\right)%
dx<1\right\}.
\end{equation*}

The  Orlicz-Sobolev space  $W^{m,P}(\Omega)$  consists of those
(equivalence classes of ) functions $u$ in  $L^{P}(\Omega)$ whose distributional derivatives $D^{\alpha} u$ also belong to $L^{P}(\Omega)$
for all $\alpha$ with $\alpha\leq m$. It may be checked by the same method used for ordinary Sobolev spaces  $W^{m,P}(\Omega)$ is a
 Banach space with respect to the norm
\begin{equation*}
\|u\|_{W^{m,  P}(\Omega)}:=\max_{0\leq |\alpha|\leq m}\left|| D^{\alpha}u|\right|_{P}.
\end{equation*}

In this paper, we shall use second order Orlicz-Sobolev space $W^{2,P}(\Omega)$  to describe problem (NP).
And   $W^{2,P}(\Omega)$ is endowed with   the following  norm
\begin{equation*}
\|u\|_{W^{2,  P}(\Omega)}:=\left||\Delta u|\right|_{P}.
\end{equation*}
As in the case of ordinary Sobolev spaces, $W_{0}^{2,   P}(\Omega)$ is taken to be the closure
of c  $C_{0}^{\infty}$ in  $W^{2,   P}(\Omega)$.

Many properties of Orlicz-Sobolev spaces are obtained by very straightforward
generalization ofthe proofs ofthe same properties for ordinary Sobolev spaces.   In past two decades, Orlicz-Sobolev theory was widely applied in nonlinear differential equations (see \cite{n3, n7, MR2146049, n2, n1, n4}  and references therein). The reader is referred to \cite{r2,MR1890178,r38} for more details on Orlicz-Sobolev spaces theory. In the proofs of our results we shall use
the following results..
\begin{lemma}[See \protect\cite{r2,   MR1890178,  r38}]
\label{l21} Under the conditions $(p_0)$, $(p_1)$,   the spaces  $L^{P}(\Omega)$,   $W_{0}^{1,  P}(\Omega)$,   $W_{0}^{2,  P}(\Omega)$,   $W^{2,  P}(\Omega)$   and  $W^{1,  P}(\Omega)$ are separable and reflexive Banach spaces..
\end{lemma}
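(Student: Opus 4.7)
The plan is to reduce everything to the base space $L^{P}(\Omega)$ and then lift the properties to the Sobolev spaces by realizing them as closed subspaces of suitable products of $L^{P}$. The key inputs from the classical Orlicz theory (e.g.\ \cite{r2,MR1890178,r38}) are: $L^{P}(\Omega)$ is separable iff $P$ satisfies the $\Delta_{2}$ condition, and $L^{P}(\Omega)$ is reflexive iff both $P$ and its complementary $N$-function $\tilde P$ satisfy $\Delta_{2}$. So the whole lemma is reduced to verifying these two $\Delta_{2}$ conditions under $(p_{0})$ and $(p_{1})$.

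First I would verify the $\Delta_{2}$ condition for $P$. This is essentially already noted in the excerpt: the upper bound $tp(t)/P(t)\le p^{+}<\infty$ in $(p_{1})$ gives, by integrating the logarithmic derivative $P'/P=p/P$, an estimate of the form $P(\lambda t)\le \lambda^{p^{+}}P(t)$ for $\lambda\ge 1$, so in particular $P(2t)\le 2^{p^{+}}P(t)$.

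Next, and this is the main obstacle, I would show that the complementary $N$-function $\tilde P$ also satisfies $\Delta_{2}$. The standard way is to use the lower bound $p^{-}>1$ from $(p_{1})$, which by the same integration argument gives $P(\lambda t)\ge \lambda^{p^{-}}P(t)$ for $\lambda\ge 1$; equivalently $P$ satisfies the $\nabla_{2}$ condition. Passing to the Legendre (complementary) transform, $\nabla_{2}$ for $P$ translates into $\Delta_{2}$ for $\tilde P$; concretely, one checks that $\tilde P(2t)\le 2^{q^{+}}\tilde P(t)$ with $q^{+}=p^{-}/(p^{-}-1)$. Here the strict inequality $p^{-}>1$ is crucial, since it is exactly what keeps the conjugate exponent finite.

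With both $P,\tilde P\in\Delta_{2}$, the classical result gives that $L^{P}(\Omega)$ is a separable and reflexive Banach space. To pass to the Sobolev spaces, I would use the isometric embedding
\begin{equation*}
W^{m,P}(\Omega)\hookrightarrow \prod_{|\alpha|\le m} L^{P}(\Omega), \qquad u\mapsto (D^{\alpha}u)_{|\alpha|\le m},
\end{equation*}
whose image is closed (this is standard: a Cauchy sequence of derivatives has a distributional limit that must coincide with the derivative of the $L^{P}$-limit of the functions themselves). Finite products of separable reflexive Banach spaces are separable and reflexive, and both properties are inherited by closed subspaces, so $W^{1,P}(\Omega)$ and $W^{2,P}(\Omega)$ are separable and reflexive. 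Finally, $W_{0}^{1,P}(\Omega)$ and $W_{0}^{2,P}(\Omega)$ are by definition closed subspaces of $W^{1,P}(\Omega)$ and $W^{2,P}(\Omega)$ respectively, so they inherit both properties, completing the proof.
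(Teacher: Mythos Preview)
Your argument is correct and is precisely the standard route one finds in the cited references \cite{r2,MR1890178,r38}; note that the paper itself does not supply a proof but simply refers to those sources. Your sketch---$(p_{1})$ yields $\Delta_{2}$ for $P$ via $p^{+}<\infty$ and $\nabla_{2}$ for $P$ (hence $\Delta_{2}$ for $\tilde P$) via $p^{-}>1$, whence $L^{P}(\Omega)$ is separable and reflexive, and the Sobolev spaces inherit these properties as closed subspaces of finite products of $L^{P}$---is exactly the classical proof.
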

Now we define a   sequence of $N$-functions $Q_0,   Q_1,   Q_2\cdots $ as follows:
\begin{align}
Q_0&=P(t),\nonumber\\
(Q_k)^{-1}&=\int_0^t\frac{(Q_k)^{-1}(\tau)}{\tau^{\frac{N+1}{N}}}d\tau<\infty,   \ k=1,  2,  \cdots.
\end{align}
For each $k$,   we assume that
\begin{equation}\label{g67}
   \int_0^t\frac{(Q_k)^{-1}(\tau)}{\tau^{\frac{N+1}{N}}}d\tau<\infty,
\end{equation}
replacing $Q_k$,  if necessary, with another $N$-function equivalent to it near infinity
and satisfying the above formula.

Let $J=J(P)$ be the smallest nonnegative integer such that
\begin{equation}
   \int_1^{\infty}\frac{(Q_J)^{-1}(\tau)}{\tau^{\frac{N+1}{N}}}d\tau<\infty.
\end{equation}
\begin{definition}\label{gd1}
   Let $M(t)>0$ denote the class of positive, continuous, increasing functions of $t>0$. If  $\mu\in M$,   the space $C_{\mu}(\bar{\Omega})$ consisting of those functions for which the norm
   $$\|u\|_{C_{\mu}(\bar{\Omega})}=\|u\|_{C(\bar{\Omega})}+\sup_{x,  y\in \Omega,   x\not=y}\frac{|u(x)-u(y)|}{\mu(|x-y|)}.  $$
   is finite
\end{definition}
It is easily to show that $C_{\mu}(\bar{\Omega})$ is a Banach space under the above norm.
\begin{lemma}[See  \cite{r2,     MR1890178,r38}]\label{gr2}
Let  $P$ and  $Q$ are   $N$-functions.
\begin{compactitem}
  \item[\quad\emph{(1)}] If $2\geq J(P)$,   then  $W^{2,  P}(\Omega)\rightarrow L^{Q_2}$ .   Moreover,  if $Q$ is an $N$-function increasing essentially more slowly than $Q_2$ near infinity,  then the imbedding  $W^{2,  P}(\Omega)\rightarrow L^{Q}$ exists and is compact.
  \item[\quad\emph{(2)}] If $2> (P)$,   then $W^{2,  P}(\Omega)\rightarrow C_Q^0(\Omega)=C^0\cap L^{\infty}(\Omega)$.
  \item[\quad\emph{(3)}] If $2>J(P)$  and $\Omega$ satisfies the strong local Lipschitz condition,  then $W^{2, P}(\Omega)\rightarrow C_{\mu}^{1-J}(\bar{\Omega})$,  where
  $$\mu(t)=\int_{t^{-n}}^{\infty}\frac{(Q_J)^{-1}(\tau)}{\tau^{\frac{N+1}{N}}}d\tau.  $$
 Moreover,  the imbedding $W^{2,   P}(\Omega)\rightarrow C^{1-J}(\bar{\Omega})$ is compact and so is $W^{2, P}(\Omega)\rightarrow C_{\nu}^{2-J-1}(\bar{\Omega})$ provided $\nu\in M$ and $\mu/\nu\in M$.

\end{compactitem}
\end{lemma}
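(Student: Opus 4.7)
This is a classical Orlicz-Sobolev embedding theorem, and the proof given in the cited references follows Adams' strategy: reduce all three parts to iterated applications of a single first-order embedding, with the $L^p$ Hardy-Littlewood-Sobolev inequality replaced by O'Neil's convolution inequality in Orlicz spaces.

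The first step is to establish the one-step embedding $W^{1,Q_k}(\Omega) \rightarrow L^{Q_{k+1}}(\Omega)$, where $Q_{k+1}$ is the N-function defined by the integral identity (1.5). Represent $u \in W^{1,Q_k}_0$ as a Riesz potential of its gradient, $|u(x)| \leq C \int_\Omega |x-y|^{1-N}\,|\nabla u(y)|\,dy$, and invoke O'Neil's inequality: the Orlicz norm of the convolution of $|z|^{1-N}$ with an $L^{Q_k}$ function is controlled by the formula that produces exactly the N-function $Q_{k+1}$ in (1.5). The convergence hypothesis placed on the defining integral is precisely what makes this target N-function well-defined.

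For part (1), iterate this embedding starting from $Q_0 = P$: since $|\nabla u| \in W^{1,P}$ for $u \in W^{2,P}$, one iteration yields $|\nabla u| \in L^{Q_1}$, so $u \in W^{1,Q_1}$, and a second iteration gives $u \in L^{Q_2}$. For compactness into $L^Q$ with $Q$ essentially slower than $Q_2$, combine the continuous embedding into $L^{Q_2}$ with the Orlicz Rellich-Kondrachov theorem (compactness $W^{1,Q_1} \rightarrow L^{Q_1}$, proved by mollification plus Arzela-Ascoli) and interpolate using the essentially-slower growth of $Q$. For part (2), the hypothesis $2 > J(P)$ is exactly the statement that the tail integral $\int_1^\infty (Q_1)^{-1}(\tau)/\tau^{(N+1)/N}\,d\tau$ converges, which by O'Neil's inequality upgrades the second iteration from an Orlicz bound to an $L^\infty \cap C^0$ bound. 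For part (3), applying the same tail convergence to the representation of $u(x)-u(y)$ as an integral of derivatives produces precisely the modulus of continuity $\mu$ appearing in the statement; the compactness refinements follow from Arzela-Ascoli, using $\mu/\nu \in M$ to obtain a uniform modulus of continuity on bounded subsets of $W^{2,P}$.

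The main obstacle will be the careful bookkeeping in O'Neil's inequality: verifying that the integral transform taking $Q_k$ to $Q_{k+1}$ really matches (1.5), and that the dichotomy between the sub-critical case ($J(P) \geq 2$) landing in an Orlicz space and the super-critical case ($J(P) < 2$) landing in $C^0 \cap L^\infty$ or in a H\"older-type space with modulus $\mu$ is governed exactly by the convergence of the defining tail integral. Once this is set up, the three statements reduce to a clean two-step iteration.
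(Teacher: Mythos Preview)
The paper does not supply a proof of this lemma at all: it is stated with the attribution ``See \cite{r2, MR1890178, r38}'' and used as a black box. There is therefore nothing in the paper to compare your argument against. Your sketch is a reasonable outline of the classical proof strategy found in those references (iterated first-order Orlicz embeddings via the Riesz-potential representation and an Orlicz-space convolution/rearrangement inequality, with the tail-integral dichotomy governing whether one lands in $L^{Q_2}$ or in $C^0\cap L^\infty$), and would be acceptable as an indication of where the result comes from.
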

Now we consider the spaces $E:=W^{2,  P}(\Omega)\cap W_0^{1,  P}(\Omega)$ and the following functional
\begin{equation}
I(u)=\int_{\Omega}P(\Delta u)dx-\int_{\Omega} F(u)dx:=\mathcal{P}(u)-\mathcal{F}(u).
\end{equation}
As in \cite{r4},  we easily know that the critical points of $I$  are just the weak solutions of problem (NP).
\section{Main results and its proof}
\begin{compactitem}
  \item[($f_\ast$):]  There exists an odd increasing homeomorphism $h: R\rightarrow R$ and nonnegative constants $a_{1},   a_{2}$ such that
$$|f(t)|\leq a_{1}+a_{2}h(|t|),    \forall t\in R,  $$
and  $$\lim_{t\rightarrow+\infty}\frac{H(t)}{Q_2(kt)}=0,  \ \ \forall k>0,  $$  where
\begin{equation} \label{ge7}
H(t):=\int_{0}^{t} h(s)ds.
\end{equation}
\end{compactitem}

\begin{compactitem}
  \item[$ (f_1):$] There exist two constants  $\theta>p^+$ and $R_0>0$ such that  ,
\begin{equation}
0<\theta F(t)\leq tf(t), \ \  t\not=0,  |t|>R_0 \nonumber
\end{equation}
\item[$ (f_2):$] $f(t)=o(p(t))$ as $t\rightarrow 0$.
\end{compactitem}
Similar to the condition  ($p$),   for $H$,  we assume that the following condition£º
\begin{compactitem}
\item[($h$):] $1<h^{-}:=\displaystyle \inf_{t>0}\frac{th(t)}{H(t)} \leq h^{+}:=\displaystyle
\sup_{t>0}\frac{th(t)}{H(t)}<+\infty$.
\end{compactitem}
\textbf{For convenience of statement,   we will denote $Q_2(t)$ by  $P_{\ast}(t)$}.   Moreover, we   assume that $p_{\ast}^{-}$,   $h^{+}$ and  $h^{-}$ satisfies following inequality
\begin{equation}
p^{+}<h^{-}\leq h^{+}<p_{\ast}^{-}.
\end{equation}

\begin{theorem}\label{gt621}
Assume that the conditions ($p_0$),   ($p_1$),   ($p_2$),   ($f_{\ast}$),   ($f_{1}$) and ($f_{2}$) are satisfied,  then problem (OP) or  problem (NP)
has at least one nontrivial solution.
\end{theorem}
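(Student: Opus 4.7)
The plan is to apply the Mountain Pass Theorem to $I(u)=\mathcal{P}(u)-\mathcal{F}(u)$ on $E=W^{2,P}(\Omega)\cap W_0^{1,P}(\Omega)$, equipped with the norm $\|u\|=|\Delta u|_P$. A preliminary step is to check that $I\in C^1(E,\mathbb R)$ with
\[
\langle I'(u),\varphi\rangle=\int_\Omega p(\Delta u)\,\Delta\varphi\,dx-\int_\Omega f(u)\,\varphi\,dx,
\]
so that critical points of $I$ are weak solutions of (NP). The growth bound in $(f_*)$ integrates to $|F(t)|\le C_1+C_2 H(|t|)$, and the second half of $(f_*)$ says that $H$ grows essentially more slowly than $Q_2=P_*$ near infinity; Lemma~\ref{gr2}(1) therefore supplies the compact embedding $E\hookrightarrow L^H(\Omega)$ needed to make $\mathcal{F}$ and $\mathcal{F}'$ well-defined and continuous.

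For the mountain pass geometry near the origin, I would combine $(f_2)$ with $(f_*)$ and the $\Delta_2$ property of $H$ to obtain, for every $\varepsilon>0$, a constant $C_\varepsilon$ with $|F(t)|\le\varepsilon P(|t|)+C_\varepsilon H(|t|)$. The modular--norm inequalities flowing from $(p_1)$ give $\mathcal{P}(u)\ge\min\{\|u\|^{p^-},\|u\|^{p^+}\}$, while the embedding $E\hookrightarrow L^H$ together with $(h)$ yields $\int_\Omega H(|u|)\,dx\le C_3\max\{\|u\|^{h^-},\|u\|^{h^+}\}$. Since $h^->p^+$, for $\varepsilon$ small and $\|u\|=\rho$ small enough we obtain $I(u)\ge\alpha>0$. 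To produce $e\in E$ with $I(e)<0$, I would integrate $(f_1)$ to get $F(s)\ge c|s|^\theta-C$ with $\theta>p^+$; then for any fixed $u_0\in C_0^\infty(\Omega)\setminus\{0\}$ and $t\ge 1$, the estimates $\mathcal{P}(tu_0)\le t^{p^+}\mathcal{P}(u_0)$ and $\int_\Omega F(tu_0)\,dx\ge ct^\theta\int_\Omega|u_0|^\theta\,dx-C'$ force $I(tu_0)\to-\infty$.

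For the Palais--Smale condition, suppose $I(u_n)\to c$ and $I'(u_n)\to 0$. Using $(p_1)$ and $(f_1)$ in the standard way,
\[
\theta I(u_n)-\langle I'(u_n),u_n\rangle\ge(\theta-p^+)\,\mathcal{P}(u_n)-C,
\]
so $\mathcal{P}(u_n)$ is bounded and, via $\Delta_2$, $\{u_n\}$ is bounded in $E$. Passing to a subsequence, $u_n\rightharpoonup u$ in $E$ and $u_n\to u$ in $L^H(\Omega)$ by the compact embedding. A Krasnoselski--type continuity argument gives $\int_\Omega f(u_n)(u_n-u)\,dx\to 0$, and since $\langle I'(u_n),u_n-u\rangle\to 0$, we deduce
\[
\int_\Omega\bigl(p(\Delta u_n)-p(\Delta u)\bigr)(\Delta u_n-\Delta u)\,dx\to 0.
\]
Strict monotonicity of $p$ (from $(p_0)$ and $(p_2)$) then yields $\Delta u_n\to\Delta u$ in measure, and the $\Delta_2$ condition upgrades this to convergence in the modular, hence in the Luxemburg norm; consequently $u_n\to u$ in $E$.

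Once (PS) is verified, the Mountain Pass Theorem yields a critical value $c\ge\alpha>0=I(0)$ with an associated nontrivial critical point $u\in E$, a weak solution of (NP); setting $v=-p(\Delta u)$ recovers a nontrivial solution $(u,v)$ of (OP). The step I expect to be the main obstacle is the last line of the (PS) argument: establishing the $(S_+)$-type property of the operator $u\mapsto-\Delta(p(\Delta u))$ in the second-order Orlicz--Sobolev setting, for which I would adapt the monotonicity--convexity techniques already used in \cite{MR2763349,r4} for the first-order framework, using $(p_2)$ to control the second derivative of $P$ and pass from convergence in measure to convergence in norm.
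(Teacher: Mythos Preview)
Your proposal is correct and follows essentially the same approach as the paper: verify the mountain pass geometry using $(f_*)$, $(f_2)$, and $h^->p^+$ near the origin and $(f_1)$ with $\theta>p^+$ at infinity, then establish (PS) via boundedness from the $\theta I(u_n)-\langle I'(u_n),u_n\rangle$ computation, the compact embedding $E\hookrightarrow L^H$, and the $(S_+)$ property of $\mathcal{P}'$. The only difference is cosmetic: the paper packages the $(S_+)$ property as Lemma~\ref{gl622} (stated without proof, called ``standard''), whereas you sketch how to prove it directly via strict monotonicity of $p$ and $\Delta_2$.
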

Before the proof of Theorem \ref{gt621}, we need prove some useful lemmas.
\begin{lemma}[\cite{r12}]\label{l36}
Let $\rho(u)=\int_{\Omega}P(u)dx$, we have
\begin{compactitem}
  \item[\quad\emph{(1)}:] if $|u|_P<1$, then $|u|_P^{p^{+}}\leq\rho(u)\leq |u|_P^{p^{-}}$;
  \item[\quad\emph{(2)}:] if $|u|_P>1$, then $|u|_P^{p^{-}}\leq\rho(u)\leq |u|_P^{p^{+}}$;
   \item[\quad\emph{(3)}] if $0<t<1$, then $t^{p^{+}}P (u)\leq P(tu)\leq t^{p^{-}}P(u)$;
  \item[\quad\emph{(4)}] if $t>1$, then $t^{p^{-}}P(u)\leq P(tu)\leq t^{p^{+}}P(u)$.
\end{compactitem}
\end{lemma}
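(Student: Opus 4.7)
The plan is to establish the pointwise comparison inequalities (3) and (4) first, by a simple logarithmic-derivative argument driven by hypothesis $(p_1)$, and then deduce the norm--modular inequalities (1) and (2) from them by exploiting the defining property of the Luxemburg norm together with the $\Delta_2$ regularity of $P$.

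For parts (3) and (4), fix $u$ with $u\neq 0$ (the statement is trivial when $u=0$, since $P(0)=0$) and consider $\xi(t):=P(tu)$ on $(0,\infty)$. By $(p_0)$, $\xi$ is $C^1$ and positive, with $\xi'(t)=u\,p(tu)$, so
\[
\frac{d}{dt}\log\xi(t)\;=\;\frac{u\,p(tu)}{P(tu)}\;=\;\frac{1}{t}\cdot\frac{tu\,p(tu)}{P(tu)}.
\]
Hypothesis $(p_1)$ asserts precisely that $p^{-}\le s\,p(s)/P(s)\le p^{+}$ for every $s>0$, and therefore
\[
\frac{p^{-}}{t}\;\le\;\bigl(\log\xi\bigr)'(t)\;\le\;\frac{p^{+}}{t}.
\]
Integrating this estimate from $1$ to $t$ when $t>1$, and from $t$ to $1$ when $0<t<1$, and exponentiating, yields
\[
t^{p^{-}}P(u)\le P(tu)\le t^{p^{+}}P(u)\;(t>1),\qquad t^{p^{+}}P(u)\le P(tu)\le t^{p^{-}}P(u)\;(0<t<1),
\]
which are exactly (4) and (3).

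For (1) and (2), assume $u\not\equiv 0$ and set $k_0:=|u|_{P}>0$. Since $(p_1)$ forces $P$ to satisfy the $\Delta_2$ condition (as noted right after the hypotheses $(p_0)$--$(p_2)$), the modular $k\mapsto\int_\Omega P(|u|/k)\,dx$ is finite-valued, continuous, and strictly decreasing in $k$ on $(0,\infty)$, so the infimum defining the Luxemburg norm is attained and $\int_\Omega P(|u|/k_0)\,dx=1$. Applying the pointwise inequalities already established with $t=k_0$ to the argument $|u|/k_0$ and integrating over $\Omega$ gives
\[
k_0^{p^{-}}\le \rho(u)\le k_0^{p^{+}}\;(k_0>1),\qquad k_0^{p^{+}}\le \rho(u)\le k_0^{p^{-}}\;(k_0<1),
\]
which are precisely (2) and (1).

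The only genuinely delicate point is the equality $\int_\Omega P(|u|/|u|_P)\,dx=1$ (rather than merely $\le 1$); once the $\Delta_2$ condition is invoked this is standard, relying on the dominated convergence theorem to pass the limit through the modular. Every other step is just one-variable calculus applied to $t\mapsto\log P(tu)$ together with a single integration over $\Omega$, so no further obstacle is expected.
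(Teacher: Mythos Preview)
The paper does not prove this lemma; it is simply quoted from reference~[r12] (Fukagai--Ito--Narukawa) with no argument given. Your proof is correct and is in fact the standard one: the logarithmic-derivative computation driven by $(p_1)$ is exactly how (3) and (4) are derived in the cited source, and the passage from (3)--(4) to (1)--(2) via the unit-ball identity $\int_\Omega P(|u|/|u|_P)\,dx=1$ under $\Delta_2$ is likewise standard. One cosmetic point: in writing $\xi'(t)=u\,p(tu)$ and invoking $(p_1)$ you are implicitly taking $u>0$; since $P$ is an even $N$-function you should replace $u$ by $|u|$ (or simply note that the case $u<0$ reduces to $u>0$ by evenness), but this does not affect the argument.
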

 Similar to Lemma \ref{l36},  we have
\begin{lemma}\label{l4}
\begin{compactitem}
  \item[\quad \emph{(1)}:] If $\|u\|<1$, then
$\|u\|^{p^{+}}\leq \mathcal{P}(u)\leq \|u\|^{p^{-}}$.
  \item[\quad \emph{(2)}:] If $\|u\|>1$, then
$\|u\|^{p^{-}}\leq \mathcal{P}(u)\leq \|u\|^{p^{+}}$.
\end{compactitem}
\end{lemma}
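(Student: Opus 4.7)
The plan is to reduce Lemma \ref{l4} directly to Lemma \ref{l36} by recognizing that the functional $\mathcal{P}$ is literally the Luxemburg modular of $L^{P}(\Omega)$ evaluated at the Laplacian of $u$. On $E = W^{2,P}(\Omega)\cap W_0^{1,P}(\Omega)$ the prescribed norm is $\|u\| = |\Delta u|_{P}$ and $\mathcal{P}(u) = \int_{\Omega} P(\Delta u)\,dx = \rho(\Delta u)$ in the notation of Lemma \ref{l36} (here using that $p$ is extended oddly to $\mathbb{R}$ so that $P$ is even, hence $P(\Delta u) = P(|\Delta u|)$). With this identification, parts (1) and (2) of Lemma \ref{l4} become, word for word, parts (1) and (2) of Lemma \ref{l36} applied to the single function $\Delta u \in L^{P}(\Omega)$.

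To make the reduction self-contained I would run the usual scaling argument. Set $k := \|u\|$ and $v := u/k$, so $|\Delta v|_{P} = 1$; by the $\Delta_2$ condition supplied by $(p_1)$ and continuity of the modular in the scaling parameter, one has the normalization $\int_{\Omega} P(\Delta v)\,dx = 1$. If $k < 1$, apply part (3) of Lemma \ref{l36} pointwise with $t = k$ and argument $\Delta v$ to get
\begin{equation*}
k^{p^{+}}\, P(\Delta v) \;\leq\; P(k\,\Delta v) \;\leq\; k^{p^{-}}\, P(\Delta v),
\end{equation*}
and integrate over $\Omega$ to conclude $k^{p^{+}} \leq \mathcal{P}(u) \leq k^{p^{-}}$, which is (1). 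If $k > 1$, apply part (4) of Lemma \ref{l36} in the same fashion to obtain $k^{p^{-}}\, P(\Delta v) \leq P(k\,\Delta v) \leq k^{p^{+}}\, P(\Delta v)$, and integration gives $k^{p^{-}} \leq \mathcal{P}(u) \leq k^{p^{+}}$, which is (2). There is essentially no obstacle here; the only point deserving mild care is the normalization $\int_{\Omega} P(\Delta v)\,dx = 1$ when $|\Delta v|_{P} = 1$, a standard consequence of the $\Delta_2$ property of $P$.
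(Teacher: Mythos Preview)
Your proposal is correct and matches the paper's intent: the paper itself offers no separate proof of Lemma~\ref{l4} beyond the line ``Similar to Lemma~\ref{l36}, we have,'' and your observation that $\mathcal{P}(u)=\rho(\Delta u)$ with $\|u\|=|\Delta u|_{P}$ makes Lemma~\ref{l4} an immediate specialization of Lemma~\ref{l36}(1)--(2). The additional scaling argument via parts (3)--(4) of Lemma~\ref{l36} and the normalization $\int_{\Omega}P(\Delta v)\,dx=1$ is a correct and standard way to re-derive those inequalities, though strictly speaking it is not needed once you invoke Lemma~\ref{l36} directly.
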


\begin{lemma}\label{gl622}
  The functional $\mathcal{P}\in C(E,   R)$ is convex, sequentially weakly lower semi-continuous   and
  $$\mathcal{P}'(u)v=\int_{\Omega}p(\Delta u)\Delta v,  \ \forall u,  v\in E.$$  Moreover, the mapping $\mathcal{P}'(u): E\rightarrow E^{\ast}$ is a strictly monotone, bounded homeomorphism, and is of $S^{+}$ type, namely
  $$u_n\rightharpoonup u\ \mbox{\ and\ }\limsup \mathcal{P}'(u_n)(u_n-u)\leq 0\ \mbox{imply}\ u_n\rightarrow u.  $$
\end{lemma}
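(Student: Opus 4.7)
The plan is to verify the six assertions of the lemma in sequence, relying on the $\Delta_2$ condition for $P$ (which follows from $(p_1)$), on Lemma \ref{l4}, and on the fact that $E=W^{2,P}(\Omega)\cap W^{1,P}_0(\Omega)$ is a reflexive Banach space whose norm is $\||\Delta u|\|_P$, so that the linear Laplacian $\Delta : E \to L^P(\Omega)$ is an isometric embedding. First I would handle continuity, convexity, and weak lower semi-continuity of $\mathcal{P}$. Continuity follows from the continuity of the Nemytskii operator $w \mapsto P(w)$ from $L^P(\Omega)$ to $L^1(\Omega)$, which is guaranteed by $\Delta_2$; convexity is pointwise (since $P$ is convex); weak lower semi-continuity is then automatic from convexity and strong continuity via Mazur's lemma. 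The Gateaux derivative formula is obtained by differentiating under the integral sign: the growth bound $|p(s)s|\leq p^+ P(|s|)$ coming from $(p_1)$ provides an $L^1$ dominant, so dominated convergence gives $\mathcal{P}'(u)v=\int_\Omega p(\Delta u)\Delta v\,dx$ for $u,v\in E$, and the same growth estimate with H\"older's inequality in complementary Orlicz spaces shows $\mathcal{P}'$ is bounded on bounded subsets.

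Next, for strict monotonicity, I would use that $p$ is strictly increasing by $(p_0)$, so $(p(a)-p(b))(a-b)>0$ whenever $a\neq b$; integrating and applying it to $a=\Delta u$, $b=\Delta v$ yields
\begin{equation*}
\langle \mathcal{P}'(u)-\mathcal{P}'(v),\,u-v\rangle \;=\; \int_\Omega \bigl(p(\Delta u)-p(\Delta v)\bigr)(\Delta u-\Delta v)\,dx \;>\;0
\end{equation*}
whenever $u\neq v$ in $E$. For the $(S^+)$ property, assume $u_n\rightharpoonup u$ and $\limsup \mathcal{P}'(u_n)(u_n-u)\leq 0$. Since $\mathcal{P}'$ is monotone, $\mathcal{P}'(u)(u_n-u)\to 0$, hence $\langle \mathcal{P}'(u_n)-\mathcal{P}'(u),u_n-u\rangle\to 0$. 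The main technical step is to upgrade this modular-type convergence to norm convergence, which I would do via the uniform convexity of $P$ under $(p_1)$: the Clarkson-type inequality for $N$-functions satisfying $1<p^-\leq p^+<\infty$ gives a constant $c>0$ such that
\begin{equation*}
\int_\Omega P\!\left(\tfrac{\Delta u_n - \Delta u}{2}\right) dx \;\leq\; c\int_\Omega \bigl(p(\Delta u_n)-p(\Delta u)\bigr)(\Delta u_n-\Delta u)\,dx \;\longrightarrow\;0,
\end{equation*}
so by Lemma \ref{l36} the modular convergence implies $|\Delta u_n-\Delta u|_P\to 0$, i.e.\ $u_n\to u$ in $E$.

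Finally, for the homeomorphism claim, injectivity is immediate from strict monotonicity. Surjectivity follows from the Browder--Minty theorem: $\mathcal{P}'$ is monotone, hemicontinuous (it is Gateaux-differentiable with continuous derivative), and coercive because Lemma \ref{l4} combined with $\mathcal{P}'(u)u\geq p^-\mathcal{P}(u)$ yields $\mathcal{P}'(u)u/\|u\|\geq p^-\|u\|^{p^--1}\to\infty$ as $\|u\|\to\infty$. Continuity of the inverse is obtained from the $(S^+)$ property: if $\mathcal{P}'(u_n)\to \mathcal{P}'(u)$ in $E^*$, then $\{u_n\}$ is bounded by coercivity, a weak-limit subsequence $u_{n_k}\rightharpoonup\tilde u$ satisfies $\mathcal{P}'(u_{n_k})(u_{n_k}-\tilde u)\to 0$, whence $u_{n_k}\to\tilde u$ strongly by $(S^+)$; by continuity of $\mathcal{P}'$ and injectivity $\tilde u=u$, and a standard subsequence argument gives full convergence. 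The step I expect to be the main obstacle is the uniform-convexity-type inequality controlling $\int_\Omega P((\Delta u_n-\Delta u)/2)\,dx$ by the duality pairing, since it is the one place where a genuine Orlicz-space fact (beyond mere pointwise monotonicity and $\Delta_2$) is invoked; I would quote it from the standard references \cite{r2,MR1890178,r38} rather than reprove it.
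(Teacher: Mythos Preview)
The paper itself does not prove this lemma: immediately after stating it together with Lemma~\ref{dl31}, the authors simply write that both proofs are standard and follow the methods of \cite{r4} and \cite{r7}. Your outline is precisely the standard route taken in those references for continuity, convexity, weak lower semicontinuity, the derivative formula, strict monotonicity, and the Browder--Minty argument for the homeomorphism claim; on those points there is nothing to compare.

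The one place your argument does not go through is the $(S^{+})$ step. The displayed Clarkson-type inequality
\[
\int_\Omega P\!\left(\frac{\Delta u_n-\Delta u}{2}\right)dx \;\le\; c\int_\Omega \bigl(p(\Delta u_n)-p(\Delta u)\bigr)(\Delta u_n-\Delta u)\,dx
\]
is in general \emph{false} under $(p_0)$--$(p_2)$ when $p^{-}<2$. Already for $p(t)=|t|^{p-2}t$ with $1<p<2$, taking $a=1+\varepsilon$, $b=1$ gives a right-hand integrand of order $(p-1)\varepsilon^{2}$ while $P(|a-b|/2)\sim\varepsilon^{p}$, and $\varepsilon^{p-2}\to\infty$; no constant $c$ can work. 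Since the paper only assumes $p^{-}>1$, this inequality is unavailable, and the references you propose to cite do not contain it. The argument actually used in \cite{r4,r7} avoids any such pointwise estimate: from convexity one has $\mathcal{P}(u)\ge \mathcal{P}(u_n)+\langle\mathcal{P}'(u_n),u-u_n\rangle$, so the hypothesis together with weak lower semicontinuity forces $\mathcal{P}(u_n)\to\mathcal{P}(u)$; separately, since $\int_\Omega(p(\Delta u_n)-p(\Delta u))(\Delta u_n-\Delta u)\,dx\to 0$ with nonnegative integrand, along a subsequence $\Delta u_n\to\Delta u$ a.e.\ (by strict monotonicity of $p$); then the Br\'ezis--Lieb lemma in $L^{P}$ combined with $\int_\Omega P(\Delta u_n)\,dx\to\int_\Omega P(\Delta u)\,dx$ yields $\int_\Omega P(\Delta u_n-\Delta u)\,dx\to 0$, hence norm convergence by $\Delta_2$, and the usual subsequence argument gives convergence of the full sequence.
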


\begin{lemma}\label{dl31}
Under the condition  ($f_{\ast}^1$),   the functionals $\mathcal{F}(u): E\rightarrow R$ is sequentially weakly continuous,
$\mathcal{F}(u)\in C^1(X,   R)$,   and for all $u,   \phi \in E$,
$$\mathcal{F}^{'}(u)\phi=\int_{\Omega}f(u)\phi dx.  $$
 The mapping $\mathcal{F}^{'}: E\rightarrow E^{\ast} $ is sequentially weakly-strongly continuous,   namely,
\begin{equation} u_n\rightharpoonup u {\ \mbox{implies}\ } \mathcal{F}^{'}(u_n)\rightarrow
\mathcal{F}^{'}(u),
\end{equation}
where $\rightharpoonup$ and $\rightarrow$ denote the weak convergence and strong convergence in $E$ respectively.
\end{lemma}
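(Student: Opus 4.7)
The plan is to base everything on the growth condition $(f_\ast)$, which says $|f(t)|\le a_1+a_2 h(|t|)$ with $H(t)/Q_2(kt)\to 0$ at infinity for every $k>0$. By Lemma \ref{gr2}(1), this last fact (together with the hypothesis $2\ge J(P)$ implicit in using the embedding into $Q_2=P_\ast$) gives the compact embedding $E\hookrightarrow\hookrightarrow L^{H}(\Omega)$, and also a continuous embedding $E\hookrightarrow L^{Q_2}(\Omega)$. These will be the two workhorses throughout.

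First I would establish the sequential weak continuity of $\mathcal{F}$. Given $u_n\rightharpoonup u$ in $E$, compactness yields $u_n\to u$ in $L^{H}(\Omega)$. Passing to a subsequence with $u_n\to u$ a.e.\ and with an $L^{H}$ dominating function, the pointwise continuity of $F$ together with $|F(t)|\le a_1|t|+a_2 H(|t|)$ makes $\{F(u_n)\}$ uniformly integrable. Vitali's convergence theorem then gives $\mathcal{F}(u_n)\to \mathcal{F}(u)$, and a standard subsequence-of-subsequence argument upgrades this to convergence of the whole sequence.

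Next, for the $C^1$ structure, I would compute the Gateaux derivative directly: for fixed $u,\phi\in E$ and small $s$, the mean value theorem gives
\begin{equation*}
\frac{F(u+s\phi)-F(u)}{s}=f(u+\tau_s\phi)\phi,\qquad |\tau_s|\le|s|,
\end{equation*}
and the growth bound $|f(u+\tau_s\phi)\phi|\le(a_1+a_2 h(|u|+|\phi|))|\phi|$ provides an $L^1$ majorant (use H\"older in Orlicz spaces with the complementary pair $(H,\widetilde H)$, together with $E\hookrightarrow L^H$). Dominated convergence then delivers $\mathcal{F}'(u)\phi=\int_\Omega f(u)\phi\,dx$, and continuity of $\mathcal{F}'$ from $E$ to $E^\ast$ will follow from the stronger weak-strong statement below; combined with existence of the Gateaux derivative this yields $\mathcal{F}\in C^1(E,\mathbb R)$.

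The hard part is the sequentially weak-to-strong continuity of $\mathcal{F}'$. For $u_n\rightharpoonup u$ in $E$, I need to show
\begin{equation*}
\sup_{\|\phi\|_E\le 1}\Big|\int_\Omega\bigl(f(u_n)-f(u)\bigr)\phi\,dx\Big|\longrightarrow 0.
\end{equation*}
Applying H\"older in Orlicz spaces, this reduces to proving $f(u_n)\to f(u)$ in $L^{\widetilde H}(\Omega)$. Here I would invoke continuity of the Nemytskii operator $u\mapsto f(u)$ from $L^{H}$ into $L^{\widetilde H}$: since $u_n\to u$ in $L^{H}$ by compactness, I extract an a.e.\ convergent subsequence dominated by some $w\in L^{H}$; the growth $|f(t)|\le a_1+a_2 h(|t|)$ together with the Young-type inequality $\widetilde H(h(s))\le C H(s)+C$ (a standard fact from the complementary structure of $H$ and $\widetilde H$ under condition $(h)$) produces a uniform $L^1$ majorant for $\widetilde H(|f(u_n)-f(u)|)$, and Vitali closes the argument. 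The subsequence trick again promotes this to convergence of the full sequence, completing the lemma. The main obstacle, and the step that must be done carefully, is precisely the control of $\widetilde H\circ f$ in terms of $H$, because it is the only place where the $N$-function structure (rather than a polynomial one) really enters.
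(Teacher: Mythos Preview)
Your argument is correct and is precisely the standard route; note that the paper does not actually write out a proof of this lemma but merely declares it ``standard'' and defers to the references of Cl\'ement--Garc\'{\i}a-Huidobro--Man\'asevich--Schmitt and Fukagai--Narukawa, whose arguments are exactly what you have reproduced. The key technical point you single out --- the bound $\widetilde H(h(s))\le C\,H(s)+C$ --- is indeed the crux, and it follows cleanly from Young's equality $\widetilde H(h(s))=s\,h(s)-H(s)$ together with condition $(h)$, which gives $\widetilde H(h(s))\le (h^+-1)H(s)$; combined with the $\Delta_2$-property of $\widetilde H$ (a consequence of $h^->1$) this yields the required Nemytskii continuity $L^H\to L^{\widetilde H}$.
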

In fact, the proof of Lemma \ref{gl622} and Lemma \ref{dl31} is standard. So we take the similar methods in \cite{r4} and     \cite{r7} to prove   Lemma \ref{gl622} and Lemma \ref{dl31}, respectively.

\renewcommand{\proofname}

\begin{proof}[\textbf{The proof of Theorem \emph{\ref{gt621}}}]
\textbf{Step 1},  The functional  $I(u)$ satisfies (P.S.) condition.

Let $\{u_n\}$  be a  (P.S.) sequence,  namely
$$|I(u)|\leq M\  \mbox{and}\ I'(u_n)\rightarrow 0.  $$
Next we will prove that $\{u_n\}$  is bounded in $E$. Using the condition $ (f_1)$, we have
\begin{align}\label{g611}
M+\frac{1}{\theta} o(1) \|u_n\|_{E}&\geq I(u_n)-\frac{1}{\theta}I'(u_n)u_n\nonumber\\
&=\int_{\Omega}P(\Delta u_n)-\int_{\Omega} F(u_n)dx-\frac{1}{\theta}\left(\int_{\Omega}p(\Delta u_n)\Delta u_ndx-\int_{\Omega}f(u_n)u_ndx\right)\nonumber\\
&\geq \left(1-\frac{p^{+}}{\theta}\right)\int_{\Omega}P(\Delta u_n)dx+\int_{|t|>R_0}\left(\frac{1}{\theta}f(u_n)u_n-F(x,   u_n)\right)dx\nonumber\\
&\ \ +\int_{|t|\leq R_0}\left(\frac{1}{\theta}f(u_n)u_n-F(x,   u_n)\right)dx  \nonumber\\
&\geq \left(1-\frac{p^{+}}{\theta}\right)\int_{\Omega}P(\Delta u_n)dx-M_1.
\end{align}
So,
$$C(1+o(1)\|u_n\|_{E})\geq \int_{\Omega}P(\Delta u_n)dx.$$
This   implies that $\{u_n\}$ is bounded in  $E$, then $$u_n\rightharpoonup u\ \mbox{in}\ E,$$
$$u_n\rightarrow u\ \mbox{in}\  L^{H},  $$
moreover, $$\langle\mathcal{P}'(u_n),   u_n-u\rangle\rightarrow 0.  $$
Since $I(u)=\mathcal{P}(u)-\mathcal{F}(u)$,    one has
$$\langle I'(u_n),   u_n-u\rangle=\langle\mathcal{P}'(u_n),   u_n-u\rangle+\int_{\Omega} f(u_n)(u_n-u)dx.  $$
By the condition  ($f_\ast$) and H\"{o}lder inequalities in Orlicz spaces,  we obtain
\begin{align}
 \int_{\Omega} |f(u_n)(u_n-u)|dx&\leq a_1\int_{\Omega}|u_n-u|dx+a_2\int_{\Omega}h(|u_n|)(u_n-u)\nonumber\\
 &\leq (a_1\|1\|_{\tilde{H}}+a_2\|h(u_n)\|_{\tilde{H}})\|u_n-u\|_{H}.
\end{align}
Recalling that $u_n\rightarrow u$ in $L^{H}$, we get
$$\int_{\Omega} |f(u_n)(u_n-u)|dx\rightarrow  0.$$
Consequently, $$\limsup \mathcal{P}'(u_n)(u_n-u)\leq 0.  $$
Now from  Lemma \ref{gl622}, we  easily know that $u_n\rightarrow u$ in $E$.

\textbf{Step 2},   The functional $I(u)$  has mountain geometry,   that is

\begin{compactitem}
  \item [\quad\quad(1)] There exist two constants  $R_0,   r$ such that if $\|u\|=R$,  then $I(u)> r$;
  \item [\quad\quad(2)] There exists $u_0\in E$ such that if $\|u_0\|\geq R$,  then $I(u)<r$.
\end{compactitem}
By the conditions  ($f_{\ast}$) and ($f_2$),   for all $\varepsilon>0$,   There is  a constant $C=C(\varepsilon)>0$ such that
$$|F(t)|\leq a_1\varepsilon P(t)+Ca_2H(t).$$
Therefore,
\begin{align}
I(u)&=\int_{\Omega}P(\Delta u)dx-\int_{\Omega} F(u)dx\nonumber\\
    &\geq \int_{\Omega}P(\Delta u)dx-a_1\varepsilon \int_{\Omega}P(u)dx-Ca_2\int_{\Omega} H(u)dx\nonumber\\
    &\geq \|u\|^{p^{+}} -a_1\varepsilon |u|^{p^{-}}-Ca_2|u|^{h^{-}}\nonumber\\
    &\geq \|u\|^{p^{+}} -C_1\varepsilon \|u\|^{p^{-}}-C_2\|u\|^{h^{-}}(\mbox{by Sobolev embedding theorem}).
\end{align}
From the assumption $h^{-}>p^{+}$,   for $\|u\|_{H}>0 $  and  $\varepsilon$ small enough,  we immediately
obtain (1).

For  (2),   by the condition ($f_1$),   there exist two constants $C, c_0>0$ such that
$$F(t) \geq  C|t|^{\theta}-c_0.$$
Thus,
\begin{align}
I(tu_0)&=\int_{\omega} P(t\Delta u_0)dx-\int_{\Omega} F(tu_0)dx\nonumber\\
&\leq t^{p^{+}}\int_{\Omega}P(\Delta u_0)dx-t^{\theta}\int_{|u_0|\geq R_0} C|u_0|dx +(c_0+M)|\Omega|,
\end{align}
where,   $M=\sup\{|F(t)|: |t|\leq R_0 \}$.   Since $\theta>p^{+}$,  we may choose $u_0$ and $R_0>0$ such that $|\{x\in \Omega: |u_0(x)|>t_0\}|>0$,   hen we
have
$I(tu_0)\rightarrow -\infty$ as  $t\rightarrow \infty$.    So applying  the mountain pass theorem,  $I(u)$  has at least one nontrivial solution.
\end{proof}

\begin{theorem}\label{gt622}
Under the conditions of Theorem \emph{\ref{gt621}},  if $f(t)$ is an odd function,  then there exist infinitely many nontrivial weak solutions to problem (OP) or problem (NP).
\end{theorem}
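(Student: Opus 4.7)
The plan is to apply the Symmetric Mountain Pass Theorem of Ambrosetti--Rabinowitz, which produces an unbounded sequence of critical values for any even $C^1$ functional on an infinite-dimensional Banach space that vanishes at the origin, satisfies the (PS) condition, is bounded below by a positive constant on some sphere, and diverges to $-\infty$ on every finite-dimensional subspace. The $C^1$ regularity of $I$ and the identity $I(0)=0$ are already at hand from Lemmas \ref{gl622} and \ref{dl31}.

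The first observation is that the oddness hypothesis on $f$ gives immediately that $F(t)=\int_0^t f(s)\,ds$ is even, and therefore the functional $I=\mathcal{P}-\mathcal{F}$ is even on $E$. This is the only way the new assumption enters the proof.

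Next I would recycle the work already done for Theorem \ref{gt621}. Step 1 of that proof established the (PS) condition without using any sign or symmetry restriction on $f$, so it carries over verbatim. Step 2(1) delivered constants $\rho,\alpha>0$ such that $I(u)\geq\alpha$ whenever $\|u\|=\rho$, which is exactly the sphere bound required here. The only genuinely new ingredient is the symmetric version of the divergence condition: for every finite-dimensional subspace $\tilde E\subset E$ there exists $R=R(\tilde E)>0$ with $I(u)\leq 0$ for all $u\in \tilde E$ satisfying $\|u\|\geq R$. Integrating $(f_1)$ produces the familiar bound $F(t)\geq C_1|t|^{\theta}-C_2$, and combining this with Lemma \ref{l4} (which yields $\mathcal{P}(u)\leq \|u\|^{p^{+}}$ for $\|u\|\geq 1$) together with the equivalence of all norms on the finite-dimensional space $\tilde E$ (so that $\|u\|_{L^{\theta}(\Omega)}\geq c_{\tilde E}\|u\|_E$ for all $u\in\tilde E$) gives
\begin{equation*}
I(u)\leq \|u\|^{p^{+}}-C_1 c_{\tilde E}^{\theta}\|u\|^{\theta}+C_2|\Omega|,\qquad u\in\tilde E,\ \|u\|\geq 1.
\end{equation*}
Since $\theta>p^{+}$, the right-hand side tends to $-\infty$ as $\|u\|\to\infty$ inside $\tilde E$, which is the required property.

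With the four hypotheses of the Symmetric Mountain Pass Theorem verified, I conclude that $I$ admits an unbounded sequence $\{c_k\}$ of critical values, and each corresponding critical point is a nontrivial weak solution of (NP), equivalently of (OP). I do not expect any serious obstacle: the delicate analytic work (the (PS) verification via the $S^{+}$ property of $\mathcal{P}'$, and the small-sphere estimate using the embeddings into $L^P$ and $L^H$) has already been carried out for Theorem \ref{gt621}, and what remains is the essentially algebraic observation that the Ambrosetti--Rabinowitz condition, because $\theta$ dominates the growth exponent $p^{+}$ of $\mathcal{P}$, forces $I$ to be eventually negative on any finite-dimensional subspace.
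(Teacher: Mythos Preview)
Your proposal is correct and follows essentially the same route as the paper: both invoke the $Z_2$-symmetric (Ambrosetti--Rabinowitz) Mountain Pass Theorem, note that oddness of $f$ makes $I$ even, recycle the (PS) condition and the small-sphere lower bound from Theorem~\ref{gt621}, and then verify the finite-dimensional decay condition via $F(t)\geq C|t|^{\theta}-C'$, equivalence of norms on finite-dimensional subspaces, and $\theta>p^{+}$. Your write-up is in fact slightly more detailed than the paper's, which states the same argument more tersely.
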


\begin{proof}[\textbf{Proof of Theorem \emph{\ref{gt622}}}]
Since $f(t)$ is an odd function,  the functional is  even.   So we will use the \lq\lq $Z_2$-symmetric\rq\rq  version of the Mountain Pass Theorem
(see \cite{r88} to accomplish the proof of Theorem \ref{gt622}.   By this Theorem,  we  only  need verify  the following condition
\begin{compactitem}
  \item [(3)] For  arbitrary finite dimensional subspace $E_1\subset E$,    the set $S=\{u\in E_1: I(u)\geq0\}$ is bounded in $E$.
\end{compactitem}
In fact,
\begin{align}
I(u)&=\int_{\Omega} P(\Delta u)dx-\int_{\Omega} F(u)dx\nonumber\\
    &\leq \int_{\Omega}P(\Delta u)dx-\int_{\Omega} C|u|^{\theta}dx+c_0M.
\end{align}
It is well know that  all norms are equivalent in arbitrary finite dimensional subspace.   Therefore, $(\int_{\Omega} |u|^{\theta}dx)^{\frac{1}{\theta}}$  can  be regarded as the norm of $E_1$.    Now the relation  $\theta>p^{+}$ implies that  $S$  is a bounded set.
\end{proof}
Now we   replace  $f(t)$  in the conditions ($f_{\ast}$),   ($f_1$) and ($f_2$) by a new function $m(t)$,   then we can obtain three conditions, which  be denoted by  ($m_{\ast}$),   ($m_1$) and ($m_2$),  respectively.
\begin{theorem}\label{gt625}
    When  $f(t)=P'_{\ast}(t)+\lambda m(t)$,   assume that $m(t)$ satisfies the conditions $(m_{\ast})$,   $(m_1)$ and $(m_2)$,  then there is a constant $\lambda_0$ which depends on   $P(t),  N,   \theta$  such that if $\lambda>\lambda_0$,   problem  (OP)  or problem  (NP) has at least one nontrivial solution.
\end{theorem}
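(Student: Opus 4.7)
The plan is to mimic the classical Brezis--Nirenberg treatment of critical problems, transplanted to the Orlicz--Sobolev framework just set up. Write the functional as
\[
I_\lambda(u) = \int_\Omega P(\Delta u)\,dx - \int_\Omega P_\ast(u)\,dx - \lambda\int_\Omega M(u)\,dx,
\]
where $M(t)=\int_0^t m(s)\,ds$. First I would verify that the mountain pass geometry of Theorem \ref{gt621} survives for every $\lambda>0$. Near zero, Lemma \ref{l36} together with the continuous embedding $W^{2,P}\hookrightarrow L^{P_\ast}$ (Lemma \ref{gr2}(1)) gives $\int P_\ast(u)\,dx\leq C\|u\|^{p_\ast^-}$ with $p_\ast^->p^+$, while $(m_\ast)$ and $(m_2)$ furnish $|M(u)|\leq \varepsilon P(u)+C_\varepsilon \tilde H(u)$ with $\tilde H$ strictly subcritical, so $I_\lambda(u)\geq c\|u\|^{p^+}-\text{h.o.t.}$ on a small sphere, uniformly in $\lambda$. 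The descent branch follows because $(m_1)$ gives $M(t)\geq C|t|^\theta-c_0$ and $P_\ast$ itself satisfies an AR-type inequality via $(p_1)$ applied to $P_\ast$; hence $I_\lambda(tu_0)\to -\infty$ as $t\to\infty$ for any fixed $u_0\not\equiv 0$.

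The second and central step is to locate the levels at which the Palais--Smale condition holds. Because $P_\ast$ is the critical $N$-function for the embedding, the argument of Step 1 of Theorem \ref{gt621} no longer closes: a bounded (PS) sequence $\{u_n\}$ may have $u_n\rightharpoonup u$ while $\int P_\ast(u_n)\,dx$ fails to pass to the limit, due to concentration. I would invoke an Orlicz version of Lions' concentration--compactness principle (the tool advertised in the abstract and keywords) to decompose
\[
P(\Delta u_n)\,dx\rightharpoonup\mu,\qquad P_\ast(u_n)\,dx\rightharpoonup\nu
\]
in the weak-$\ast$ sense of measures, link the atoms of $\nu$ to atoms of $\mu$ of comparable size through the sharp embedding constant $S$, and then apply a Brezis--Lieb-type splitting to show that each concentration point costs at least a universal energy $\eta_0=\eta_0(P,N)>0$. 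Combined with the identification $I'_\lambda(u)=0$ (from $u_n\rightharpoonup u$ together with the weak--strong continuity of the $m$-part proved in Lemma \ref{dl31}), this yields (PS)$_c$ for all $c<\eta_0$.

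The third step exploits the subcritical perturbation $\lambda M$ to bring the mountain pass level below $\eta_0$. Fix a nontrivial $u_0\in C_c^\infty(\Omega)$ with $\int_{|u_0|\geq R_0}|u_0|^\theta\,dx>0$. Along the ray $t\mapsto tu_0$, Lemma \ref{l36}(4) bounds $\int P(t\Delta u_0)\,dx$ by $t^{p^+}\mathcal{P}(u_0)$ for $t\geq 1$, the critical contribution is nonnegative, and $(m_1)$ forces $\int M(tu_0)\,dx\geq Ct^\theta-O(1)$ for large $t$. Optimising in $t$ and using $\theta>p^+$ gives an estimate of the form
\[
\max_{t\geq 0} I_\lambda(tu_0)\leq A(u_0)\,\lambda^{-p^+/(\theta-p^+)},
\]
with $A(u_0)>0$ independent of $\lambda$. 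Choosing $\lambda_0=\lambda_0(P,N,\theta)$ so that this upper bound falls strictly below $\eta_0$ yields $c_\lambda<\eta_0$ for every $\lambda>\lambda_0$, and the mountain pass theorem then produces the desired nontrivial critical point.

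The main obstacle will be the second step: formulating and proving the Orlicz--Sobolev concentration--compactness principle for the bi-Laplacian-type functional $\int P(\Delta u)\,dx$ against a critical $N$-function growth, and extracting from it the quantitative lower bound $\eta_0>0$. Unlike the pure power case, the exponents $p^-,p^+$ and $p_\ast^-,p_\ast^+$ only straddle the relevant quantities, so each step of the Brezis--Lieb splitting has to be executed with the two-sided bounds of Lemma \ref{l36} and their analogues for $P_\ast$; the explicit dependence of $\lambda_0$ on $P$, $N$ and $\theta$ comes precisely from tracking these constants through the argument.
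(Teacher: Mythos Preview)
Your proposal is correct and follows essentially the same three-step scheme as the paper: mountain-pass geometry as in Theorem~\ref{gt621}, an Orlicz concentration--compactness lemma (the paper's Lemmas~\ref{gl625} and~\ref{gl626}) yielding (PS)$_c$ below an explicit threshold $c_0$, and then pushing the minimax level under $c_0$ by taking $\lambda$ large along a fixed ray. The only cosmetic difference is in the last step, where you drop the critical term $-\int_\Omega P_\ast(tu_0)\,dx$ and extract the explicit rate $\lambda^{-p^+/(\theta-p^+)}$, whereas the paper keeps that term and argues that the maximiser $t_\lambda\to 0$ as $\lambda\to\infty$.
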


Similar to \cite{r12},   we can adopt similar methods to prove the following  the  concentration-compactness principle
\begin{lemma}\label{gl625}
Let ,  $u_n\rightharpoonup u$ in $E$  and  $P_{\ast}(|u_n|)\rightharpoonup \nu$ and $P(|\Delta u_n|)\rightharpoonup \mu$ in $\mathcal{U}(R^N)$. Then there exist an at most countable set $J$, a family  $\{x_j\}_{j\in J}$  of distinct points in $R^N$ and a family $\{\nu_{j}\},  \{\mu_{j}\}_{j\in J}$ such that
\begin{compactitem}
  \item [\quad\emph{($a$)}] $\nu=P_{\ast}(|u|)+\sum_{j\in J}\nu_j\delta_{x_j}$;
   \item [\quad\emph{($b$)}] $\mu\geq P(|\Delta u|)+\sum_{j\in J}\mu_j\delta_{x_j}$;
    \item [\quad\emph{($c$)}] $0<\nu_j\leq \max\left\{S_0^{p_{\ast}^{-}}\mu_j^{\frac{p_{\ast}^{-}}{p_{-}}},   S_0^{p_{\ast}^{+}}\mu_j^{\frac{p_{\ast}^{+}}{p_{-}}},   S_0^{p_{\ast}^{-}}\mu_j^{\frac{p_{\ast}^{-}}{p_{+}}},   S_0^{p_{\ast}^{+}}\mu_j^{\frac{p_{\ast}^{+}}{p_{+}}}\right\}$;
\end{compactitem}
where,  $\mathcal{U}(R^N)$ denotes the space of Radon space,  $\mu,   \nu$ are negative measure in $\mathcal{U}(R^N)$,  $S_0$ denotes optimal constant of Orlicz-Sobolev embedding,   $\delta_{x_j}$  is the Dirac measure of mass 1 concentrated at $x_j$.
\end{lemma}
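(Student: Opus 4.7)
The plan is to mimic P.-L. Lions's first concentration-compactness lemma, carefully adapted to the non-homogeneous Orlicz setting. I split the argument into a reduction, a Sobolev-inequality step on bump functions, and a limiting step producing the four exponents.

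\textbf{Reduction to $u=0$.} I would first use a Brezis--Lieb type decomposition for the Orlicz functions $P$ and $P_{\ast}$, relying on the $\Delta_2$-condition supplied by $(p_1)$. Setting $v_n=u_n-u$, one obtains $\int_\Omega P_{\ast}(|u_n|)\,dx-\int_\Omega P_{\ast}(|u|)\,dx-\int_\Omega P_{\ast}(|v_n|)\,dx\to 0$, and analogously for $P(|\Delta\,\cdot\,|)$. At the level of weak-$\ast$ limits of Radon measures this yields $\nu=P_{\ast}(|u|)+\tilde\nu$ and $\mu\geq P(|\Delta u|)+\tilde\mu$ (the latter inequality uses convexity of $P$ and weak lower semi-continuity, which is Lemma \ref{gl622}). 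It therefore suffices to analyse $\tilde\nu,\tilde\mu$ for a sequence $v_n\rightharpoonup 0$ in $E$.

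\textbf{Sobolev inequality on bumps.} Given $\phi\in C_c^\infty(R^N)$, I test the Orlicz--Sobolev embedding of Lemma \ref{gr2}(1), namely $|w|_{P_{\ast}}\leq S_0\,|\Delta w|_{P}$, on $w=\phi v_n$. Via the Leibniz expansion $\Delta(\phi v_n)=\phi\Delta v_n+2\nabla\phi\cdot\nabla v_n+v_n\Delta\phi$, the compact embeddings into lower-order Orlicz spaces (again from Lemma \ref{gr2}) yield $|\Delta(\phi v_n)-\phi\Delta v_n|_{P}\to 0$, and hence $|\phi v_n|_{P_{\ast}}\leq S_0\,|\phi\Delta v_n|_{P}+o(1)$.

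\textbf{Passage to the limit and the four exponents.} To convert this norm inequality into one for the measures $\tilde\nu,\tilde\mu$, I use Lemma \ref{l36} twice. Parts (3)--(4), applied with the rescaling $\phi$, sandwich $\int P_{\ast}(\phi|v_n|)\,dx$ between multiples of $\int \phi^{p_{\ast}^{\pm}}P_{\ast}(|v_n|)\,dx$, and similarly $\int P(|\phi\Delta v_n|)\,dx$ is sandwiched using $p^{\pm}$; passing $n\to\infty$ these produce $\int \phi^{p_{\ast}^{\pm}}\,d\tilde\nu$ and $\int \phi^{p^{\pm}}\,d\tilde\mu$. Parts (1)--(2) of the same lemma then convert Luxemburg norms into modulars, but the direction of the inequality depends on whether the norm exceeds $1$; this is precisely where the four distinct exponent pairs arise. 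Applying a standard cutoff $\phi_{\varepsilon,x_0}$ approximating $\chi_{\{x_0\}}$ and letting $\varepsilon\to 0$ yields, for each prospective atom $x_0$,
\begin{equation*}
\tilde\nu(\{x_0\})^{1/a}\leq S_0\,\tilde\mu(\{x_0\})^{1/b},\qquad a\in\{p_{\ast}^{-},p_{\ast}^{+}\},\ b\in\{p^{-},p^{+}\},
\end{equation*}
which rearranges exactly to conclusion (c).

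\textbf{Atomic structure and main obstacle.} A classical reverse-H\"older argument based on this inequality shows that the non-atomic part of $\tilde\nu$ must vanish: whenever $\tilde\mu(\{x_0\})=0$ the displayed bound forces $\tilde\nu(\{x_0\})=0$, and averaging over small balls propagates this to the continuous parts. Hence $\tilde\nu=\sum_{j}\nu_j\delta_{x_j}$; the finiteness of $\tilde\nu$ forces $J$ to be at most countable, and setting $\mu_j:=\tilde\mu(\{x_j\})$ gives (b). The principal obstacle throughout is the non-homogeneity of $P$ and $P_{\ast}$: unlike in the classical $L^p$ case, one cannot rely on a single scaling identity $\int|\phi|^{p^{\ast}}|u_n|^{p^{\ast}}\,dx\to\int|\phi|^{p^{\ast}}\,d\nu$, and the two-sided modular/norm comparisons from Lemma \ref{l36} must be invoked in every step. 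This is exactly what forces conclusion (c) to appear as a maximum over the four combinations of $\{p_{\ast}^{-},p_{\ast}^{+}\}\times\{p^{-},p^{+}\}$ rather than a single inequality.
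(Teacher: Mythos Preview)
The paper does not actually give a proof of this lemma; it only states that ``similar to \cite{r12}, we can adopt similar methods'' and then moves on. Your sketch is precisely the Fukagai--Ito--Narukawa adaptation of Lions's concentration--compactness lemma to the Orlicz setting that \cite{r12} carries out: Brezis--Lieb reduction to $u=0$ via the $\Delta_2$-condition, the Sobolev inequality applied to $\phi v_n$ with the lower-order Leibniz terms killed by compact embeddings, and the two-sided modular/norm comparisons of Lemma~\ref{l36} that generate the four exponent pairs in (c). So your proposal is correct and is exactly the approach the paper is pointing to.
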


\begin{lemma}\label{gl626}
Assume that $g(t)$ satisfies the conditions ($g_{\ast}$),  ($g_1$) and ($g_2$). Let   $\{u_n\}\subset H$  be a (PS) sequence of functional $I$ with energy level  $c$.   If
\begin{align}
c<c_0:=\min&\left\{\left(\frac{p^{-}_{\ast}}{p^{+}}-1\right)\left[\frac{p^{-}}{p_{\ast}^{-}}\left(\frac{1}{S_0^{p_{\ast}^{-}}}\right)^
{\frac{p^{-}}{p_{\ast}^{-}}}\right]^{\frac{p_{\ast}^-}{p_{\ast}^--p^{-}}},  \left(\frac{p^{-}_{\ast}}{p^{+}}-1\right)\left[\frac{p^{-}}{p_{\ast}^{-}}\left(\frac{1}{S_0^{p_{\ast}^{+}}}\right)^
{\frac{p^{-}}{p_{\ast}^{+}}}\right]^{\frac{p_{\ast}^+}{p_{\ast}^+-p^{-}}},  \right.  \nonumber\\
 &\ \ \left.  \left(\frac{p^{-}_{\ast}}{p^{+}}-1\right)\left[\frac{p^{-}}{p_{\ast}^{-}}\left(\frac{1}{S_0^{p_{\ast}^{-}}}
 \right)^{\frac{p^{+}}{p_{\ast}^{-}}}\right]^{\frac{p_{\ast}^-}{p_{\ast}^--p^{+}}},  \left(\frac{p^{-}_{\ast}}{p^{+}}-1\right)\left[\frac{p^{-}}{p_{\ast}^{-}}\left(\frac{1}{S_0^{p_{\ast}^{+}}}\right)^
{\frac{p^{+}}{p_{\ast}^{+}}}\right]^{\frac{p_{\ast}^+}{p_{\ast}^+-p^{+}}}\right\},
\end{align}
then $\{u_n\}$ has a convergent subsequence in $E$.
\end{lemma}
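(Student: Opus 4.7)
The plan is to run the concentration-compactness alternative from Lemma \ref{gl625}, combined with a standard test-function computation, to show that each concentration atom contributes a definite amount to the energy level; the hypothesis $c<c_0$ will then rule out all atoms, and strong convergence follows from the $S^{+}$ property of $\mathcal{P}'$ given by Lemma \ref{gl622}. The first task is to show that $\{u_n\}$ is bounded in $E$. Since $P_\ast$ automatically satisfies $p_\ast^{-}P_\ast(t)\le tP_\ast'(t)$ by the definition of $p_\ast^{-}$, and $(m_1)$ gives an Ambrosetti--Rabinowitz condition on $m$, one obtains a single $\theta\in(p^{+},p_\ast^{-})$ with $\theta F(t)\le tf(t)$ for $|t|$ large. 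Forming $I(u_n)-\theta^{-1}I'(u_n)u_n$ exactly as in Step 1 of the proof of Theorem \ref{gt621} bounds $\int_\Omega P(|\Delta u_n|)\,dx$, and Lemma \ref{l4} converts this into a uniform bound on $\|u_n\|$. Passing to a subsequence, $u_n\rightharpoonup u$ in $E$, and Lemma \ref{gl625} supplies the concentration data $\{x_j\}_{j\in J}$ with masses $\mu_j,\nu_j$.

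The decisive step is the atom-killing argument. For each $j\in J$, take a cutoff $\phi_{\epsilon,j}\in C_0^\infty$ with $\phi_{\epsilon,j}\equiv 1$ on $B(x_j,\epsilon)$ and supported in $B(x_j,2\epsilon)$, and test $I'(u_n)$ against $\phi_{\epsilon,j}u_n$. Expanding $\Delta(\phi_{\epsilon,j}u_n)$ produces a main term $\int p(\Delta u_n)\,\phi_{\epsilon,j}\,\Delta u_n\,dx$ and remainder terms involving $\nabla\phi_{\epsilon,j}$ and $\Delta\phi_{\epsilon,j}$; the remainders vanish when $n\to\infty$ and $\epsilon\to 0$ by Orlicz--Hölder together with the compact embeddings of Lemma \ref{gr2}. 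The $\lambda m$ part of $\int f(u_n)\phi_{\epsilon,j}u_n$ also vanishes because $m$ is subcritical relative to $P_\ast$ by $(m_\ast)$. What survives, after using $(p_1)$ in the form $p^{-}P(t)\le tp(t)\le p^{+}P(t)$ and the analogous bound $p_\ast^{-}P_\ast(t)\le tP_\ast'(t)\le p_\ast^{+}P_\ast(t)$, is a two-sided comparison between $\mu_j$ and $\nu_j$. Combined with part (c) of Lemma \ref{gl625}, the resulting inequalities force $\mu_j\le C_i\mu_j^{\beta_i}$ in each of four regimes (according to whether the concentrating pieces behave like the exponent $p^{-}$ or $p^{+}$, and $p_\ast^{-}$ or $p_\ast^{+}$), with $\beta_i>1$. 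Hence either $\mu_j=0$, or $\mu_j$ and $\nu_j$ admit explicit lower bounds, one for each of the four branches.

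Next I compare with the energy. Writing $c=\lim\bigl(I(u_n)-(p^{+})^{-1}I'(u_n)u_n\bigr)$, the contribution of $\mathcal{P}$ is non-negative by $(p_1)$, the contribution of $\lambda m$ vanishes by $(m_\ast)$ and compactness, and the critical part produces exactly $\bigl(p_\ast^{-}/p^{+}-1\bigr)\nu_j$ at each surviving atom. Inserting the four possible lower bounds on $\nu_j$ reproduces the four quantities in the definition of $c_0$, so $c\ge c_0$, contradicting the hypothesis. Therefore $J=\emptyset$, which yields $\int_\Omega P_\ast(|u_n|)\,dx\to\int_\Omega P_\ast(|u|)\,dx$, so $u_n\to u$ in $L^{P_\ast}$. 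Orlicz--Hölder combined with $(m_\ast)$ then gives $\int_\Omega f(u_n)(u_n-u)\,dx\to 0$, hence $\limsup_n\mathcal{P}'(u_n)(u_n-u)\le 0$, and the $S^{+}$ property of $\mathcal{P}'$ from Lemma \ref{gl622} concludes $u_n\to u$ in $E$.

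The main obstacle is precisely the atom-killing step. Because $P$ and $P_\ast$ are not pure powers but only sandwiched between powers with exponents $p^{\pm}$ and $p_\ast^{\pm}$, the classical Sobolev identity $\mu_j\le S\nu_j^{p/p^\ast}$ splits into four asymmetric inequalities. Pairing them correctly with part (c) of Lemma \ref{gl625} so that the resulting lower bounds on $\nu_j$ match exactly the four terms defining $c_0$, and controlling the cutoff error terms in an Orlicz setting where Lemma \ref{l36} has to be applied separately on $\{|u_n|\le 1\}$ and $\{|u_n|>1\}$, is where the bulk of the technical work lies.
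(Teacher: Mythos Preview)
Your proposal is correct and follows essentially the same route as the paper: boundedness via the AR-type estimate from Theorem~\ref{gt621}, the concentration--compactness Lemma~\ref{gl625}, the cutoff test $\langle I'(u_n),\phi_{\varepsilon,j}u_n\rangle$ to compare $\mu_j$ and $\nu_j$, the four-case split coming from part~(c), and finally the energy identity $c=\lim\bigl(I(u_n)-(p^{+})^{-1}I'(u_n)u_n\bigr)$ to derive the contradiction with $c<c_0$. One small correction: in the global energy step the $\lambda m$ contribution $\lambda\int_\Omega\bigl[(p^{+})^{-1}m(u_n)u_n-M(u_n)\bigr]\,dx$ does not vanish in general; rather, it is non-negative by $(m_1)$ (since $\theta>p^{+}$), which is what allows you to drop it from below---this is also how the paper handles it.
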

\begin{proof}[\textbf{Proof of Lemma \emph{\ref{gl626}}}]
Since $\{u_n\}$ is a (PS) sequence,  similar to the proof of theorem \ref{gt621}, we easily know that $\{u_n\}$ is bounded in $E$.  $E$ is a reflexive Banach
space,   so there exists a  weakly convergent subsequence of $\{u_n\}$ which we denote by $\{u_n\}$.   Thus
$$u_n\rightharpoonup u\ \mbox{in} E,$$
$$u_n\rightarrow u \ \mbox{in}\  L^{Q}(\Omega),$$
where,  $Q$ is a $N$-function increasing essentially more
slowly than $P_{\ast}(t)$ near infinity.   Applying the above concentration-compactness
principle, we infer
\begin{compactitem}
  \item [\quad($a$)] $\nu=P_{\ast}(|u|)+\sum_{j\in J}\nu_j\delta_{x_j}$;
   \item [\quad($b$)] $\mu\geq P(|\Delta u|)+\sum_{j\in J}\mu_j\delta_{x_j}$;
    \item [\quad($c$)] $0<\nu_j\leq \max\left\{S_0^{p_{\ast}^{-}}\mu_j^{\frac{p_{\ast}^{-}}{p_{-}}},   S_0^{p_{\ast}^{+}}\mu_j^{\frac{p_{\ast}^{+}}{p_{-}}},   S_0^{p_{\ast}^{-}}\mu_j^{\frac{p_{\ast}^{-}}{p_{+}}},   S_0^{p_{\ast}^{+}}\mu_j^{\frac{p_{\ast}^{+}}{p_{+}}}\right\}$.
\end{compactitem}

Let $\phi\in C^{\infty}(R^N)$ be such that $\phi=1,   x\in B(x_k,   \varepsilon)$,   $\phi=0,   x\in B(x_k,   2\varepsilon)^{c}$,    $|\nabla \phi|\leq \frac{2C}{\varepsilon^{p^{-}p^{+}}}$ and
$|\Delta \phi|\leq \frac{2C}{\varepsilon^{2p^{-}p^{+}}}$,   where $x_i\in\bar{\Omega}$ belongs to the support of $\nu$.
Since the sequence $\{u_n\}$ is bounded in $E$,
$$0\leftarrow \langle I'(u_n),   u_n\phi\rangle=\int_{\Omega}p(\Delta u_n)\Delta(u_n\phi)dx-\int_{\Omega}P'_{\ast}(u_n)u_n\phi dx-\lambda\int_{\Omega}m(u_n)\phi dx,  $$
that is,
$$\int_{\Omega}P'_{\ast}(u_n)u_n\phi dx+\lambda\int_{\Omega}m(u_n)\phi dx-\int_{\Omega}p(\Delta u_n)\Delta(u_n\phi)dx\rightarrow 0,\ n\rightarrow+\infty.$$
By the H\"{o}lder inequalities in Orlicz spaces and Lemma \ref{l36},
\begin{align}\label{ere}
  0&\leq \lim_{n\rightarrow\infty} \left|\int_{\Omega}p(\Delta u_n)\nabla u_n\cdot\nabla \phi dx\right|\nonumber\\
   &\leq \lim_{n\rightarrow\infty}\|p(\Delta u_n)\|_{\tilde{P}}\|\nabla  u_n\nabla\cdot \phi\|_{P}\nonumber\\
   &\leq C\|\nabla  u\cdot\nabla \phi\|_{P}\nonumber\\
   &\leq\left\{ \begin{array}{l}
              C\left(\int_{\Omega} P(\nabla u \cdot\nabla\phi)dx\right)^{\frac{1}{p^{-}}} \\
             C\left(\int_{\Omega} P(\nabla u \cdot\nabla\phi)dx\right)^{\frac{1}{p^{+}}}
           \end{array}\right.  \nonumber\\
 &\leq\left\{\begin{array}{l}
            C\left(\int_{B(x_k,   2\varepsilon)\cap\Omega} P(|\nabla u||\nabla\phi|)dx\right)^{\frac{1}{p^{-}}} \\
           C\left(\int_{B(x_k,   2\varepsilon)\cap\Omega}  P(|\nabla u| |\nabla\phi|)dx\right)^{\frac{1}{p^{+}}}
         \end{array}\right.  \nonumber\\
     &\leq \left\{\begin{array}{ll}
                    C\left(\int_{B(x_k,   2\varepsilon)\cap\Omega} \frac{1}{\varepsilon^{p^{-}}}P(|\nabla u|)dx\right)^{\frac{1}{p^{-}}} \\
                   C\left(\int_{B(x_k,   2\varepsilon)\cap\Omega} \frac{1}{\varepsilon^{p^{-}}}P(|\nabla u|)dx\right)^{\frac{1}{p^{+}}}
                 \end{array}\right.  \nonumber\\
    &\leq\left\{\begin{array}{l}
                  C\left(\int_{B(x_k,   2\varepsilon)\cap\Omega} \frac{1}{\varepsilon^{\frac{N}{2}}}dx\right)^{\frac{2}{N}} \left(\int_{B(x_k,   2\varepsilon)\cap\Omega}(P(|\nabla u|))^{\frac{N}{N-2p^{-}}}dx\right)^{\frac{N-2p^{-}}{Np^{-}}}\\
                  C\left(\int_{B(x_k,   2\varepsilon)\cap\Omega} \frac{1}{\varepsilon^{\frac{N}{2}}}dx\right)^{\frac{2}{N}} \left(\int_{B(x_k,   2\varepsilon)\cap\Omega}(P(|\nabla u|))^{\frac{N}{N-2p^{-}}}dx\right)^{\frac{N-2p^{-}}{Np^{+}}}
                \end{array}\right.  \nonumber\\
    &\leq \left\{\begin{array}{l}
                    C\left(\int_{B(x_k,   2\varepsilon)\cap\Omega}(P(|\nabla u|))^{\frac{N}{N-2p^{-}}}dx\right)^{\frac{N-2p^{-}}{Np^{-}}} \rightarrow 0 \  \mbox{as}\ \varepsilon\rightarrow 0\\
                   C\left(\int_{B(x_k,   2\varepsilon)\cap\Omega}(P(|\nabla u|))^{\frac{N}{N-2p^{-}}}dx\right)^{\frac{N-2p^{-}}{Np^{+}}}\rightarrow 0 \ \mbox{as}\ \varepsilon\rightarrow 0,
                 \end{array}\right.
   \end{align}
where last equation of \eqref{ere} converges to zero because of the property absolute continuity of integral.
Note that when the property absolute continuity of integral is used, one need to check the integrand is integrable. In fact,  $(P(|\nabla t|))^{\frac{N}{N-2p^{-}}}$ increase  essentially more slowly than  $P_{\ast}(t)$,  so $(P(|\nabla u|))^{\frac{N}{N-2p^{-}}}$ is integrable. In the same way,  we also obatin
\begin{align}
 0&\leq \lim_{n\rightarrow\infty} \left|\int_{\Omega}u_np(\Delta u_n)\Delta \phi dx\right|\rightarrow 0, \  \mbox{as}\varepsilon\rightarrow 0.
\end{align}
Thus,
\begin{align}
0&=\lim_{\varepsilon\rightarrow0}\lim_{n\rightarrow+\infty}\left[\int_{\Omega}P'_{\ast}(u_n)u_n\phi dx+\lambda\int_{\Omega}m(u_n)\phi dx-\int_{\Omega}p(\Delta u_n)\Delta(u_n\phi)dx\right]\nonumber\\
&\leq  \lim_{\varepsilon\rightarrow0}\lim_{n\rightarrow+\infty}\left[\int_{\Omega}p_{\ast}^{+}P_{\ast}(u_n)\phi dx+\lambda\int_{\Omega}m(u_n)\phi dx-\int_{\Omega}p(\Delta u_n)\Delta(u_n\phi)dx\right]\nonumber\\
&=p_{\ast}^{+}\nu_{k} -\lim_{\varepsilon\rightarrow0}\lim_{n\rightarrow+\infty}\left[\lambda\int_{\Omega}m(u_n)\phi dx-\int_{\Omega}p(\Delta u_n)\Delta u_n\phi dx\right.  \\\nonumber
&\ \ \left.  -2\int_{\Omega}p(\Delta u_n)(\nabla u_n\cdot \nabla \phi)dx-\int_{\Omega}u_np(\Delta u_n)\Delta\phi dx\right]\nonumber\\
&=p_{\ast}^{+}\nu_{k}-\lim_{\varepsilon\rightarrow0}\lim_{n\rightarrow+\infty}\left[\lambda\int_{\Omega}m(u_n)\phi dx-\int_{\Omega}p(\Delta u_n)\Delta u_n\phi dx\right]\nonumber\\
&\leq p_{\ast}^{+}\nu_{k}-\lim_{\varepsilon\rightarrow0}\lim_{n\rightarrow+\infty}\left[\lambda\int_{\Omega}m(u_n)\phi dx-p^{-}\int_{\Omega}P(\Delta u_n)\phi dx\right]\nonumber\\
&\leq p_{\ast}^{+}\nu_{k}-p^{-}\mu_k.
\end{align}
This implies that  $ p_{\ast}^{+}\nu_{k}\geq p^{-}\mu_k.  $

\item [(1)] If $\max\left\{S_0^{p_{\ast}^{-}}\mu_j^{\frac{p_{\ast}^{-}}{p_{-}}},   S_0^{p_{\ast}^{+}}\mu_j^{\frac{p_{\ast}^{+}}{p_{-}}},   S_0^{p_{\ast}^{-}}\mu_j^{\frac{p_{\ast}^{-}}{p_{+}}},   S_0^{p_{\ast}^{+}}\mu_j^{\frac{p_{\ast}^{+}}{p_{+}}}\right\}=S_0^{p_{\ast}^{-}}\mu_j^{\frac{p_{\ast}^{-}}{p_{-}}}$,
then $$\left(\frac{\nu_k}{S_0^{p_{\ast}^-}}\right)^{\frac{p^{-}}{p_{\ast}^-}}p^{-}\leq P_{\ast}^{-}\nu_k.  $$
So,   either $v_k=0$, or
$$v_k\geq \left[\frac{p^{-}}{p_{\ast}^{-}}\left(\frac{1}{S_0^{P_{\ast}^{-}}}\right)^{\frac{p^{-}}{p_{\ast}^{-}}}\right]^{\frac{p_{\ast}^-}{p_{\ast}^--p^{-}}}.  $$
Now we prove that $\nu_k=0$.   And if not,  for some $k$, we get $v_k\geq \left[\frac{p^{-}}{p_{\ast}^{-}}\left(\frac{1}{S_0^{P_{\ast}^{-}}}\right)\right]^{\frac{p_{\ast}^-}{p_{\ast}^-}-p^{-}},$  moreover, for a (PS) sequence $\{u_n\}$, we deduce
\begin{align}\
c&=\lim_{n\rightarrow+\infty}I(u_n)=\lim_{n\rightarrow+\infty}\left(I(u_n)-\frac{1}{p^{+}}I'(u_n)u_n\right)\nonumber\\
&=\lim_{n\rightarrow+\infty}\left(\int_{\Omega}P(\Delta u_n)dx-\int_{\Omega} P^{\ast}(u_n)dx-\lambda \int_{\Omega}M(u_n)dx\right)\nonumber\\
&\ \ -\lim_{n\rightarrow+\infty}\left(\frac{1}{p^{+}}\int_{\Omega}p(\Delta u_n)\Delta u_n dx-\frac{1}{p^{+}}\int_{\Omega} P'_{\ast}(u_n)u_n dx- \frac{\lambda }{p^{+}}\int_{\Omega}m(u_n)u_ndx\right)\nonumber\\
&\geq \lim_{n\rightarrow}\left(\frac{p^{-}_{\ast}}{p^{+}}-1\right)\int_{\Omega} P_{\ast}(u_n) dx+\lim_{n\rightarrow +\infty}\int_{\Omega}\left[\frac{1}{p^{+}}m(u_n)u_n-M(u_n)\right]dx\nonumber\\
&\geq\left(\frac{p^{-}_{\ast}}{p^{+}}-1\right)\int_{\Omega} d\nu= \sum_{i=1}^{k}    \nu_{i}  \geq \nu_k\nonumber\\
&\geq \left(\frac{p^{-}_{\ast}}{p^{+}}-1\right)\left[\frac{p^{-}}{p_{\ast}^{-}}\left(\frac{1}{S_0^{p_{\ast}^{-}}}\right)^
{\frac{p^{-}}{p_{\ast}^{-}}}\right]^{\frac{p_{\ast}^-}{p_{\ast}^--p^{-}}}.
\end{align}

\item [(2)] If $\max\left\{S_0^{p_{\ast}^{-}}\mu_j^{\frac{p_{\ast}^{-}}{p_{-}}},   S_0^{p_{\ast}^{+}}\mu_j^{\frac{p_{\ast}^{+}}{p_{-}}},   S_0^{p_{\ast}^{-}}\mu_j^{\frac{p_{\ast}^{-}}{p_{+}}},   S_0^{p_{\ast}^{+}}\mu_j^{\frac{p_{\ast}^{+}}{p_{+}}}\right\}=S_0^{p_{\ast}^{+}}\mu_j^{\frac{p_{\ast}^{+}}{p_{-}}}$,   similar to calculation of (1),   we have
    $$c>\left(\frac{p^{-}_{\ast}}{p^{+}}-1\right)\left[\frac{p^{-}}{p_{\ast}^{-}}\left(\frac{1}{S_0^{p_{\ast}^{+}}}\right)^
{\frac{p^{-}}{p_{\ast}^{+}}}\right]^{\frac{p_{\ast}^+}{p_{\ast}^+-p^{-}}}.  $$

\item [(3)] If $\max\left\{S_0^{p_{\ast}^{-}}\mu_j^{\frac{p_{\ast}^{-}}{p_{-}}},   S_0^{p_{\ast}^{+}}\mu_j^{\frac{p_{\ast}^{+}}{p_{-}}},   S_0^{p_{\ast}^{-}}\mu_j^{\frac{p_{\ast}^{-}}{p_{+}}},   S_0^{p_{\ast}^{+}}\mu_j^{\frac{p_{\ast}^{+}}{p_{+}}}\right\}=S_0^{p_{\ast}^{-}}\mu_j^{\frac{p_{\ast}^{-}}{p_{+}}}$,   similar to calculation of (1),   we have
$$c\geq\left(\frac{p^{-}_{\ast}}{p^{+}}-1\right)\left[\frac{p^{-}}{p_{\ast}^{-}}\left(\frac{1}{S_0^{p_{\ast}^{-}}}\right)^
{\frac{p^{+}}{p_{\ast}^{-}}}\right]^{\frac{p_{\ast}^-}{p_{\ast}^--p^{+}}}.  $$
\item [(4)] If $\max\left\{S_0^{p_{\ast}^{-}}\mu_j^{\frac{p_{\ast}^{-}}{p_{-}}},   S_0^{p_{\ast}^{+}}\mu_j^{\frac{p_{\ast}^{+}}{p_{-}}},   S_0^{p_{\ast}^{-}}\mu_j^{\frac{p_{\ast}^{-}}{p_{+}}},   S_0^{p_{\ast}^{+}}\mu_j^{\frac{p_{\ast}^{+}}{p_{+}}}\right\}=S_0^{p_{\ast}^{+}}\mu_j^{\frac{p_{\ast}^{+}}{p_{+}}}$,   similar to calculation of (1),   we have
    $$c>\left(\frac{p^{-}_{\ast}}{p^{+}}-1\right)\left[\frac{p^{-}}{p_{\ast}^{-}}\left(\frac{1}{S_0^{p_{\ast}^{+}}}\right)^
{\frac{p^{+}}{p_{\ast}^{+}}}\right]^{\frac{p_{\ast}^+}{p_{\ast}^+-p^{+}}}.  $$

The above four case is contradictory with assumptions of Lemma \ref{gl626},   so $\{u_n\}$ has a convergent subsequence.
\end{proof}

\begin{proof}[\textbf{Proof of Theorem \emph{\ref{gt625}}}]
Next we will use mountain pass theorem to prove Theorem \ref{gt625}.   Therefore,  we need to check the following conditions
\item[\quad\quad(1)] there are   two constants $R,   r$  such that if $\|u\|=R$, then $I(u)>r$;
\item[\quad\quad(2)] there is a $u_0\in E$ such that if $\|u_0\|>R$, then $I(u_0)<r$;
\item[\quad\quad(3)] there is a continuous cuve $\gamma:[0,  1]\rightarrow H$ such that $\gamma0)=0,   \gamma(1)=u_0$  and $\sup_{0\leq t\leq 1}I(\gamma (t))\leq c_0$.
We can take the methods of  Theorem \ref{gt621}   to check the conditions (1)and (2),   so we omit it.   Next,  we will focus on  verification of  the condition (3).

For each $u\in E$,  we deduce
\begin{align}
I(u)&=\int_{\Omega} P(\Delta u)dx-\int_{\Omega}P_{\ast}(u)-\lambda\int_{\Omega} M(u)dx\nonumber\\
    &\leq\max\left\{\begin{array}{c}
                      \|u\|^{p^{+}} -\|u\|^{p_{\ast}^-}_{P_{\ast}}-\lambda\int_{\Omega}M(u)\\
                      \|u\|^{p^{-}} -\|u\|^{p_{\ast}^+}_{P_{\ast}}-\lambda\int_{\Omega}M(u) \\
                       \|u\|^{p^{+}} -\|u\|^{p_{\ast}^-}_{P_{\ast}}-\lambda\int_{\Omega}M(u) \\
                       \|u\|^{p^{-}} -\|u\|^{p_{\ast}^-}_{P_{\ast}}-\lambda\int_{\Omega}M(u)
                    \end{array}\right.  \nonumber\\
                    &\leq \max\left\{\begin{array}{c}
                      \|u\|^{p^{+}} -\|u\|^{p_{\ast}^-}_{P_{\ast}}-\lambda C\|u\|_{L^{\theta}}^{\theta}\\
                      \|u\|^{p^{-}} -\|u\|^{p_{\ast}^+}_{P_{\ast}}-\lambda C\|u\|_{L^{\theta}}^{\theta} \\
                       \|u\|^{p^{+}} -\|u\|^{p_{\ast}^-}_{P_{\ast}}-\lambda C\|u\|_{L^{\theta}}^{\theta} \\
                       \|u\|^{p^{-}} -\|u\|^{p_{\ast}^-}_{P_{\ast}}-\lambda C\|u\|_{L^{\theta}}^{\theta}
                    \end{array}.  \right.
\end{align}
Set
\begin{equation}
\Psi(u):=\|u\|^{p^{+}} -\|u\|^{p_{\ast}^-}_{P_{\ast}}-\lambda\int_{\Omega}M(u):=\max\left\{\begin{array}{c}
                      \|u\|^{p^{+}} -\|u\|^{p_{\ast}^-}_{P_{\ast}}-\lambda C\|u\|_{L^{\theta}}^{\theta}\\
                      \|u\|^{p^{-}} -\|u\|^{p_{\ast}^+}_{P_{\ast}}-\lambda C\|u\|_{L^{\theta}}^{\theta} \\
                       \|u\|^{p^{+}} -\|u\|^{p_{\ast}^-}_{P_{\ast}}-\lambda C\|u\|_{L^{\theta}}^{\theta} \\
                       \|u\|^{p^{-}} -\|u\|^{p_{\ast}^-}_{P_{\ast}}-\lambda C\|u\|_{L^{\theta}}^{\theta}
                    \end{array}.  \right.
\end{equation}
Let $\omega\in E$,   $\|\omega\|_{P_{\ast}}=1$ and $\Phi(t)=\Psi(t\omega)$.   Obviously, $\lim_{t\rightarrow +\infty}\Phi(t)=-\infty$.    This implies that $\Phi(t)$
 has a maximum value, that is,   there exists a constant $t_{\lambda}>0$ such that $\sup_{t>0}\Phi(t)=\Phi_{t_{\lambda}}$.  So,
$$0=\Phi'(t_{\lambda})=p^{+}t_{\lambda}^{p^{+}-1}\|\omega\|^{p^{+}}-p_{\ast}^-t_{\lambda}^{p_{\ast}^--1}-C\lambda \theta t_{\lambda}^{\theta-1}\|\omega\|_{L^{\theta}}^{\theta}.  $$
This guarantees that
\begin{equation}\label{ge1234}
p^{+}\|\omega\|^{p^{+}-1}=p_{\ast}^-t_{\lambda}^{p_{\ast}^--p^{+}}+C\lambda \theta t_{\lambda}^{\theta-p^{+}}\|\omega\|_{L^{\theta}}^{\theta},
\end{equation}
so $t_{\lambda}$ is bounded.   Since $\lambda \theta \|\omega\|_{L^{\theta}}^{\theta}\rightarrow +\infty$ as $\lambda\rightarrow +\infty$,   by virtue of \eqref{ge1234}, we know that
$\lim_{\lambda\rightarrow +\infty}t_{\lambda}=0$,   moreover, $\lim_{\lambda\rightarrow +\infty}\Psi(t\omega)=0$.   Therefore, there exists a $\lambda_0>0$ such that if $\lambda>\lambda_0$,  then
$$\sup_{t\geq 0}I(u)\leq \sup_{t\geq 0}\Psi(u) <c_0.$$
\end{proof}

\begin{remark}\label{qq}
  In \cite{MR2146049}, using  the H\"{o}lder inequalities in Orlicz  spaces,  D.G. de Figueiredo and his coauthors
  assume that the nonlinear terms $f$ and $g$ satisfy the $N$-function growth and construct the \lq\lq tilde-map\rq\rq
  such that  problem (OP) turns into  a strongly indefinite problem. And then they use a minimax theorem  and  an approximate method  of   finite dimension (Galerkin approximation) to obtain a nontrivial solution.  In \cite{MR2146049}, the  usual  Ambrosetti-Rabinowitz conditions are assumed for problem (P), but in our main results, the nonlinear term $g$ does not have to satisfy the usual  Ambrosetti-Rabinowitz condition. Moreover, we also  extend the notion of subcritical growth from polynomial growth to $N$-function growth for problem (OP).  So, in   a sense, we enrich  their results.

\end{remark}

 \end{document}